\newtheorem{theorem}{Theorem}[section]
\theoremstyle{definition}
\newtheorem{definition}[theorem]{Definition}
\theoremstyle{remark}
\newtheorem{remark}[theorem]{Remark}
\numberwithin{equation}{section}
\begin{document}
\title[New extremal binary codes from $\mathbb{F}_{4}+u\mathbb{F}_{4}$-lifts
of QDC codes over $\mathbb{F}_{4}$]{New extremal binary self-dual codes from
$\mathbb{F}_{4}+u\mathbb{F}_{4}$-lifts of quadratic circulant codes over $%
\mathbb{F}_{4}$}
\author{Abidin Kaya}
\author{Bahattin Yildiz}
\author{Irfan Siap}
\address{Department of Mathematics, Fatih University, 34500, Istanbul, Turkey%
}
\email{byildiz@fatih.edu.tr, akaya@fatih.edu.tr}
\address{Department of Mathematics, Yildiz Technical University, 34210,
Istanbul, Turkey}
\email{isiap@yildiz.edu.tr}
\subjclass[2000]{Primary 94B05, 94B99; Secondary 11T71, 13M99}
\keywords{quadratic residue codes, extremal self-dual codes, Gray maps,
quadratic double-circulant codes}

\begin{abstract}
In this work, quadratic double and quadratic bordered double circulant
constructions are applied to $\mathbb{F}_4+u\mathbb{F}_4$ as well as $%
\mathbb{F}_4$, as a result of which extremal binary self-dual codes
of length 56 and 64 are obtained. The binary extension theorems as
well as the ring extension version are used to obtain 7 extremal
self-dual binary codes of length $58$, 24 extremal self-dual binary
codes of length $66$ and 29 extremal self-dual binary codes of
length $68$, all with new weight enumerators, to update the list of
all the known extremal self-dual codes in the literature.
\end{abstract}

\maketitle

\section{Introduction}

Double circulant, bordered double circulant and recently four circulant
constructions are some of the well known methods by which self-dual codes
especially the extremal ones are constructed and classified. While in the
former two, the exact conditions when the resulting codes are self-dual are
not known, in the four circulant case a necessary and sufficient condition
is given for the resulting code to be self-dual. Another common theme
related to binary self-dual codes has been using extension theorems to
obtain self-dual codes of length $n+2$ from self-dual codes of length $n$.

Recently certain binary rings (rings of characteristic 2) have been
successfully used to obtain many new extremal binary self-dual codes using
the construction methods mentioned above and extension theorems. Some of the
examples of these constructions can be found in \cite%
{karadeniz58,karadeniz66,karadeniz68,karadenizfour}, etc.

Gaborit, in \cite{gaborit} combined the double and bordered double circulant
constructions with quadratic residues modulo a prime to construct self-dual
codes of certain lengths over finite fields. The construction was called
quadratic double and quadratic bordered double circulant construction. In
this case also, the conditions for the self duality of the constructed codes
were expressed explicitly.

Gaborit's method over fields is extended to the rings by the authors in \cite%
{Kaya68} for the ring $\mathbb{F}_{2}+u\mathbb{F}_{2}+u^{2}\mathbb{F}_{2}$
with $u^{3}=u$ to obtain a considerable number of new extremal binary
self-dual codes of length $68$.

In a recent work (\cite{kayayildiz}), it was shown that the extension
theorems for binary self-dual codes can also be extended to hold for
self-dual codes over binary rings.

In this work, we combine all the above mentioned methods for some
special rings, that are the rings
$\mathbb{F}_{2^{m}}+u\mathbb{F}_{2^{m}}$ for $m=1,2 $, where
$u^{2}=0$. These rings have been studied in \cite{betsumiya} and
\cite{ling}, where it was shown that they are endowed with a
duality-preserving Gray map that allows us to construct binary
self-dual codes from self-dual codes over
$\mathbb{F}_{2^{m}}+u\mathbb{F}_{2^{m}}$. To elaborate on the main
focus of the current work, we have combined the extension theorem
for rings with quadratic double and quadratic bordered double
circulant constructions over the rings
$\mathbb{F}_{2}+u\mathbb{F}_{2} $ and
$\mathbb{F}_{4}+u\mathbb{F}_{4}$ and using these methods, we have
been able to construct a considerable number of new extremal binary
self-dual codes of lengths 58, 66 and 68. The precise numbers of new
codes obtained from the constructions that will be explained in
subsequent chapters are as follows: 7 extremal binary self-dual
codes of length $58$ with new weight enumerators in $W_{58,2}$; 24
extremal binary self-dual codes of length $66$ with new weight
enumerators in $W_{66,3}$; and 29 extremal binary self-dual codes of
length $68$ with new weight enumerators in $W_{68,2}$.

The rest of the paper is organized as follows. In Section 2, we give the
preliminaries about the rings $\mathbb{F}_{2}+u\mathbb{F}_{2}$, $\mathbb{F}%
_{4}+u\mathbb{F}_{4}$, binary self-dual codes and the extension theorems.
Section 3 includes the quadratic double and bordered double circulant
constructions for self-dual codes over $\mathbb{F}_{4}+u\mathbb{F}_{4}$ and $%
\mathbb{F}_{4}$ as well as the lifts of self-dual codes over $\mathbb{F}_{4}$%
. This leads to several extremal binary self-dual codes of lengths 56 and
64, which will form a basis for the extensions to be applied in Section 4.
Extension theorems of various forms are applied in Section 4 to the extremal
codes obtained in Section 3, as a result of which numerous new extremal
binary self-dual codes of lengths 58, 66 and 68 are obtained. We conclude
with remarks and possible directions for future research.

\section{Preliminaries}

Let $\mathbb{F}_{4}$ be the finite field of four elements, in other words $%
\mathbb{F}_{4}=\mathbb{F}_{2}\left( \omega \right) $ where $\mathbb{\omega }$
is a root the unique irreducible binary quadratic polynomial $x^{2}+x+1$.
The ring $\mathbb{F}_{4}+u\mathbb{F}_{4}$ defined via $u^{2}=0$ can be
viewed as an extension of $\mathbb{F}_{2},\mathbb{F}_{4}$ or $\mathbb{F}%
_{2}+u\mathbb{F}_{2}$. Self-dual codes over a ring of characteristic 2 are
called Type II if the Lee weights are multiples of 4 and Type I otherwise.
Type II codes over $\mathbb{F}_{4}+u\mathbb{F}_{4}$ is studied in \cite{ling}%
, the results later generalized to the ring $\mathbb{F}_{2^{m}}+u\mathbb{F}%
_{2^{m}}$ \cite{betsumiya}. For more details we refer to \cite%
{ling,betsumiya}. Recently, four circulant codes over $\mathbb{F}_{4}+u%
\mathbb{F}_{4}$ have been studied in\ \cite{kayayildiz}. An $\left( \mathbb{F%
}_{4}+u\mathbb{F}_{4}\right) $-submodule $C$ of $\left( \mathbb{F}_{4}+u%
\mathbb{F}_{4}\right) ^{n}$ is called a linear code of length $n$ over $%
\mathbb{F}_{4}+u\mathbb{F}_{4}$. Let $x=\left( x_{1},x_{2},\ldots
,x_{n}\right) $ and $y=\left( y_{1},y_{2},\ldots ,y_{n}\right) $, the dual $%
C^{\perp }$ of the code $C$ is defined for the Euclidean inner product $%
\left\langle x,y\right\rangle =\sum x_{i}y_{i}$ as
\begin{equation*}
C^{\perp }=\left\{ x\in \left( \mathbb{F}_{4}+u\mathbb{F}_{4}\right)
^{n}|\left\langle x,y\right\rangle =0\text{ for all }y\in C\right\} .
\end{equation*}

$C$ is said to be self-dual if $C=C^{\perp }$. Rains finalized the upper
bound for the minimum distance $d$ of a binary self-dual code of length $n$
in \cite{Rains} as $d\leq $ $4\lfloor \frac{n}{24}\rfloor +6$ if $n\equiv 22%
\pmod{24}$ and $d\leq $ $4\lfloor \frac{n}{24}\rfloor +4$, otherwise. A
self-dual binary code is called \textit{extremal }if it meets the bound.
Throughout the text we have used the following Gray maps from \cite{ling};
\begin{equation*}
\begin{tabular}{l|l}
$\psi :\left( \mathbb{F}_{4}+u\mathbb{F}_{4}\right) ^{n}\rightarrow \left(
\mathbb{F}_{2}+u\mathbb{F}_{2}\right) ^{2n}$ & $\phi :\left( \mathbb{F}_{2}+u%
\mathbb{F}_{2}\right) ^{n}\rightarrow \mathbb{F}_{2}^{2n}$ \\
$a\omega +b\overline{\omega }\mapsto \left( a,b\right) \text{, \ }a,b\in
\left( \mathbb{F}_{2}+u\mathbb{F}_{2}\right) ^{n}$ & $a+bu\mapsto \left(
b,a+b\right) \text{, \ }a,b\in \mathbb{F}_{2}^{n}.$%
\end{tabular}%
\end{equation*}%
The maps preserve orhogonality, therefore the Gray images of self-dual codes
are self-dual.

Quadratic double circulant (QDC) codes are introduced in \cite{gaborit}. Let
$Q_{p}\left( a,b,c\right) $ be a circulant matrix\ with first row $r$ based
on quadratic residues modulo $p$ defined as $r\left[ 1\right] =a,$ $r\left[
i+1\right] =b$ if $i$ is a quadratic residue and $r\left[ i+1\right] =c$ if $%
i$ is a quadratic non-residue, we state the special case of the main theorem
from \cite{gaborit} where $p$ is an odd prime;

\begin{theorem}
$($\cite{gaborit}\label{qdc}$)$ Let $p$ be an odd prime and let $Q_{p}\left(
a,b,c\right) $ be a quadratic residue circulant matrix with $a,b$ and $c$ as
the elements of the commutative ring $R$ with identity. If $p=4k+1$ then%
\begin{eqnarray*}
&&Q_{p}\left( a,b,c\right) Q_{p}\left( a,b,c\right) ^{T} \\
&=&Q_{p}\left( a^{2}+2k\left( b^{2}+c^{2}\right) ,2ab-b^{2}+k\left(
b+c\right) ^{2},2ac-c^{2}+k\left( b+c\right) ^{2}\right) .
\end{eqnarray*}%
If $p=4k+3$ then
\begin{eqnarray*}
&&Q_{p}\left( a,b,c\right) Q_{p}\left( a,b,c\right) ^{T} \\
&=&Q_{p}(a^{2}+\left( 2k+1\right) \left( b^{2}+c^{2}\right) ,ab+ac+k\left(
b^{2}+c^{2}\right) +\left( 2k+1\right) bc, \\
&&ab+ac+k\left( b^{2}+c^{2}\right) +\left( 2k+1\right) bc).
\end{eqnarray*}
\end{theorem}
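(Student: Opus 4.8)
The plan is to recognize $Q_{p}(a,b,c)$ as the circulant matrix attached to a function on $\mathbb{Z}_{p}$ and to identify $Q_{p}(a,b,c)Q_{p}(a,b,c)^{T}$ as a periodic autocorrelation. Define $f:\mathbb{Z}_{p}\to R$ by $f(0)=a$, $f(t)=b$ if $t$ is a nonzero quadratic residue, and $f(t)=c$ if $t$ is a quadratic non-residue, so that the $(i,j)$-entry of $Q_{p}(a,b,c)$ is $f(j-i)$. Then the $(i,j)$-entry of $Q_{p}(a,b,c)Q_{p}(a,b,c)^{T}$ is $\sum_{k\in\mathbb{Z}_{p}}f(k-i)f(k-j)$, and the substitution $s=k-i$ rewrites this as $g(j-i)$ with $g(d):=\sum_{s\in\mathbb{Z}_{p}}f(s)f(s-d)$. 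In particular the product is again a circulant matrix, so it only remains to show that $g$ is constant on $\{0\}$, on the residues, and on the non-residues, and to compute those three values.

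For the first point I would use the elementary fact that $f(\lambda x)=f(x)$ for every nonzero square $\lambda$ and every $x\in\mathbb{Z}_{p}$, since multiplication by $\lambda$ fixes $0$ and permutes residues among themselves and non-residues among themselves. Reindexing the autocorrelation sum via $s\mapsto\lambda s$ then gives $g(\lambda d)=g(d)$, and because the squares act transitively on the residues and on the non-residues, $g$ takes at most three values. Hence $Q_{p}(a,b,c)Q_{p}(a,b,c)^{T}=Q_{p}\bigl(g(0),g(1),g(\nu)\bigr)$ for any fixed non-residue $\nu$. When $p\equiv 3\pmod 4$ the element $-1$ is a non-residue, so $-d$ is a non-residue whenever $d$ is; combined with the symmetry of $QQ^{T}$ this forces $g(1)=g(\nu)$, which is why the two off-diagonal coordinates coincide in the $p=4k+3$ case.

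Evaluating the three quantities is then bookkeeping. The diagonal value is immediate: $g(0)=\sum_{s}f(s)^{2}=a^{2}+\tfrac{p-1}{2}(b^{2}+c^{2})$, which is $a^{2}+2k(b^{2}+c^{2})$ for $p=4k+1$ and $a^{2}+(2k+1)(b^{2}+c^{2})$ for $p=4k+3$. For $g(1)$ I would split off the boundary terms $s=0$ and $s=1$, contributing $a\,f(-1)+a\,f(1)=a\,f(-1)+ab$, where $f(-1)=b$ or $c$ according as $p\equiv 1$ or $3\pmod 4$. The remaining sum over $s\notin\{0,1\}$ counts ordered pairs of consecutive residues and non-residues, so it is expressible through the classical cyclotomic numbers of order two modulo $p$ (with the usual values $(0,0)=k-1$, $(0,1)=(1,0)=(1,1)=k$ when $p=4k+1$, and $(0,0)=(1,0)=(1,1)=k$, $(0,1)=k+1$ when $p=4k+3$); one finds the coefficient of $bc$ is $(1,0)+(0,1)$. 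Substituting these and regrouping produces $2ab-b^{2}+k(b+c)^{2}$ in the first case and $ab+ac+k(b^{2}+c^{2})+(2k+1)bc$ in the second. The value $g(\nu)$ is handled identically with $b$ and $c$ interchanged, or simply deduced from $g(1)$ via the symmetry above, giving the stated third coordinate. Since every coefficient that appears is a non-negative integer and no division is used, the identities hold verbatim over an arbitrary commutative ring $R$.

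The hard part will be the final regrouping step: one must account for the two boundary terms $s\in\{0,1\}$ together with the correct value of $f(-1)$, and match the consecutive-pair counts to the cyclotomic numbers with the right indexing convention, so that the result collapses \emph{exactly} to $k(b+c)^{2}$ (respectively $k(b^{2}+c^{2})+(2k+1)bc$) rather than to an expression merely equal to it after further algebra. A quick way to pin down the convention and catch sign errors is to verify the two formulas by hand for the smallest primes $p=5$ and $p=7$, where $k=1$ and the constants are determined unambiguously.
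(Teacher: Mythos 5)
Your argument is correct and complete: the reduction of $Q_{p}(a,b,c)Q_{p}(a,b,c)^{T}$ to the autocorrelation $g(d)=\sum_{s}f(s)f(s-d)$, the invariance $g(\lambda d)=g(d)$ under multiplication by squares, the use of $f(-1)$ to split the two cases, and the cyclotomic numbers of order two (whose values you quote correctly) all check out and reassemble exactly into the stated coordinates. The paper itself gives no proof of this theorem --- it is imported verbatim from Gaborit's paper --- and your argument is essentially the standard one used there (counting representations of a fixed difference by pairs of residues/non-residues), so nothing further is needed.
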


\begin{definition}
The code generated by $P_{p}\left( a,b,c\right) =\left[
\begin{array}{c|c}
I_{p} & Q_{p}\left( a,b,c\right)%
\end{array}%
\right] $ over $R$ is called a quadratic pure double circulant code and is
denoted by $\mathcal{P}_{p}\left( a,b,c\right) $. In a similar way, the code
generated by
\begin{equation*}
B_{p}\left( a,b,c,\lambda ,\beta ,\gamma \right) =\left[
\begin{array}{c|c}
I_{p+1} &
\begin{array}{cc}
\lambda & \beta \times \boldsymbol{1} \\
\gamma \times \boldsymbol{1}^{T} & Q_{p}\left( a,b,c\right)%
\end{array}%
\end{array}%
\right] ,
\end{equation*}%
where $\boldsymbol{1}$ is the all $1$ vector of length $p$, is called a
bordered quadratic double circulant code and is denoted by $\mathcal{B}%
_{p}\left( a,b,c,\lambda ,\beta ,\gamma \right) $.
\end{definition}

In order to define an $\left( \mathbb{F}_{4}+u\mathbb{F}_{4}\right) $-lift
of a code over $\mathbb{F}_{4}$ we need the projection $\mu :\left( \mathbb{F%
}_{4}+u\mathbb{F}_{4}\right) ^{n}\mapsto \mathbb{F}_{4}^{n}$ defined as $\mu
\left( \overline{a}+\overline{b}u\right) =\overline{a}$.

\begin{theorem}
$($\cite{karadeniz68}$)$ \label{lift} $(\mathbb{F}_{2}$ replaced by $\mathbb{%
F}_{4})$ Let $C$ be a self-dual code of length $n$ over $\mathbb{F}_{4}+u%
\mathbb{F}_{4}$ then $\mu \left( C\right) $ is a self-orthogonal code of
length $n$ over $\mathbb{F}_{4}$.
\end{theorem}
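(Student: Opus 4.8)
The plan is to show that $\mu(C)$ is contained in its own dual under the Euclidean inner product on $\mathbb{F}_4^n$. Since $C$ is self-dual over $\mathbb{F}_4+u\mathbb{F}_4$, in particular $C$ is self-orthogonal, so for any two codewords $x=\overline{a}+u\overline{b}$ and $y=\overline{c}+u\overline{d}$ in $C$ (with $\overline{a},\overline{b},\overline{c},\overline{d}\in\mathbb{F}_4^n$) we have $\langle x,y\rangle=0$ in $\mathbb{F}_4+u\mathbb{F}_4$. Expanding, $\langle x,y\rangle=\sum_i(\overline{a}_i+u\overline{b}_i)(\overline{c}_i+u\overline{d}_i)=\sum_i\overline{a}_i\overline{c}_i+u\sum_i(\overline{a}_i\overline{d}_i+\overline{b}_i\overline{c}_i)$, using $u^2=0$. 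The first step is to read off from $\langle x,y\rangle=0$ that the ``constant part'' vanishes, i.e. $\sum_i\overline{a}_i\overline{c}_i=0$ in $\mathbb{F}_4$, which is exactly $\langle\mu(x),\mu(y)\rangle=0$.

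Next I would observe that $\mu$ is surjective onto $\mu(C)$ by definition, so every pair of codewords in $\mu(C)$ arises as $\mu(x),\mu(y)$ for some $x,y\in C$; hence the previous computation gives $\langle\mu(x),\mu(y)\rangle=0$ for all such pairs, which says precisely that $\mu(C)\subseteq\mu(C)^{\perp}$, i.e. $\mu(C)$ is self-orthogonal over $\mathbb{F}_4$. It remains only to note that $\mu(C)$ is $\mathbb{F}_4$-linear: $\mu$ is a ring homomorphism $\mathbb{F}_4+u\mathbb{F}_4\to\mathbb{F}_4$ (it kills $u$), so it is in particular $\mathbb{F}_4$-linear on the free module, and the image of a submodule under a module homomorphism is a submodule; thus $\mu(C)$ is a linear code of length $n$ over $\mathbb{F}_4$. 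This completes the argument.

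The computation is entirely routine once the coordinate expansion is written down; the only thing to be careful about is the separation of the $\mathbb{F}_4$-part and the $u$-part of $\langle x,y\rangle$, which is legitimate because $\{1,u\}$ is an $\mathbb{F}_4$-basis of $\mathbb{F}_4+u\mathbb{F}_4$, so the equation $\langle x,y\rangle=0$ really does force both components to vanish. There is no genuine obstacle here; the statement is the $\mathbb{F}_4$-analogue of the $\mathbb{F}_2$ result of \cite{karadeniz68}, and the same proof transfers verbatim with $\mathbb{F}_2$ replaced by $\mathbb{F}_4$.
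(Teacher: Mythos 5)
Your argument is correct: the coordinatewise expansion $\langle x,y\rangle=\sum_i\overline{a}_i\overline{c}_i+u\sum_i(\overline{a}_i\overline{d}_i+\overline{b}_i\overline{c}_i)$ (using $u^2=0$ and the uniqueness of the $\alpha+u\beta$ representation) immediately yields $\langle\mu(x),\mu(y)\rangle=0$, and the linearity of $\mu(C)$ follows as you say. The paper itself states this theorem without proof, citing \cite{karadeniz68}; your proof is exactly the standard argument from that source with $\mathbb{F}_2$ replaced by $\mathbb{F}_4$, so there is nothing to add.
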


\begin{definition}
Let $C$ and $D$ be codes of length $n$ over $\mathbb{F}_{4}+u\mathbb{F}_{4}$
and $\mathbb{F}_{4}$, respectively. Then, $C$ is said to be a lift of $D$ if
$\mu \left( C\right) =D$.
\end{definition}

Throughout the text we consider quadratic double circulant codes over\ $%
\mathbb{F}_{4}$, their lifts to $\mathbb{F}_{4}+u\mathbb{F}_{4}$, $\mathbb{F}%
_{2}+u\mathbb{F}_{2}$ and $\mathbb{F}_{2}$ and ring extensions of the
related self-dual codes. For extensions the following theorems from \cite%
{kayayildiz} are used. In the sequel, we let $S$ to be a commutative ring of
characteristic $2$ with identity.

\begin{theorem}
$($\cite{kayayildiz}$)$ \label{ext}Let $C$ be a self-dual code over $S$ of
length $n$ and $G=(r_{i})$ be a $k\times n$ generator matrix for $C$, where $%
r_{i}$ is the $i$-th row of $G$, $1\leq i\leq k$. Let $c$ be a unit in $S$
such that $c^{2}=1$ and $X$ be a vector in $S^{n}$ with $\left\langle
X,X\right\rangle =1$. Let $y_{i}=\left\langle r_{i},X\right\rangle $ for $%
1\leq i\leq k$. Then the following matrix%
\begin{equation*}
\left[
\begin{array}{cc|c}
1 & 0 & X \\ \hline
y_{1} & cy_{1} & r_{1} \\
\vdots & \vdots & \vdots \\
y_{k} & cy_{k} & r_{k}%
\end{array}%
\right],
\end{equation*}
generates a self-dual code $D$ over $S$ of length $n+2$.
\end{theorem}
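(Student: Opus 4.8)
The plan is to verify directly that the $(k+1)\times(n+2)$ matrix in the statement generates a self-dual code $D$ of length $n+2$ over $S$. Two things must be checked: that the rows of the new matrix are pairwise orthogonal (including each row with itself), so that $D\subseteq D^{\perp}$, and that $D$ has the correct size, so that equality holds.

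First I would compute the inner products among the rows. Denote the first new row by $\rho_{0}=(1,0,X)$ and the remaining rows by $\rho_{i}=(y_{i},cy_{i},r_{i})$ for $1\le i\le k$. We have $\langle\rho_{0},\rho_{0}\rangle = 1 + 0 + \langle X,X\rangle = 1+1 = 0$ since $S$ has characteristic $2$ and $\langle X,X\rangle=1$. Next, $\langle\rho_{0},\rho_{i}\rangle = 1\cdot y_{i} + 0\cdot cy_{i} + \langle X,r_{i}\rangle = y_{i} + y_{i} = 0$, using the definition $y_{i}=\langle r_{i},X\rangle$ and symmetry of the inner product. Finally, for $1\le i,j\le k$,
\begin{equation*}
\langle\rho_{i},\rho_{j}\rangle = y_{i}y_{j} + c^{2}y_{i}y_{j} + \langle r_{i},r_{j}\rangle = (1+c^{2})y_{i}y_{j} + \langle r_{i},r_{j}\rangle.
\end{equation*}
Since $c^{2}=1$ we get $1+c^{2}=0$, and $\langle r_{i},r_{j}\rangle=0$ because $C$ is self-dual, hence self-orthogonal; so this vanishes as well. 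Therefore every pair of rows is orthogonal and $D\subseteq D^{\perp}$.

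It remains to pin down the size of $D$, which is the step I expect to require the most care over a general ring $S$ (over a field it would just be a rank count). The generator matrix of $C$ is $k\times n$ and, since $C$ is self-dual over $S$, the module-theoretic invariant that forces $|C|^{2}=|S|^{n}$ also controls $k$; the new matrix has one extra row $\rho_{0}$ that is visibly independent of the others because of its leading $1$ in a coordinate (the first) where all other rows have entry $y_{i}$ — one can clear those by row operations $\rho_{i}\mapsto \rho_{i}+y_{i}\rho_{0}$, which does not change the code and exhibits the first coordinate as a free direct summand. After this reduction the remaining $k$ rows have the form $(0,cy_{i},r_{i}')$ and project onto a copy of $C$ in the last $n$ coordinates, so $D$ decomposes compatibly and one obtains $|D| = |S|\cdot|C| = |S|\cdot|S|^{n/2} = |S|^{(n+2)/2}$. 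Combined with $D\subseteq D^{\perp}$ and the standard duality identity $|D|\,|D^{\perp}| = |S|^{n+2}$ over such rings, this forces $|D|=|D^{\perp}|$ and hence $D=D^{\perp}$, completing the proof. The only genuinely delicate point is justifying the cardinality bookkeeping ($|C|^{2}=|S|^{n}$ and $|D||D^{\perp}|=|S|^{n+2}$) for a general commutative characteristic-$2$ ring with identity rather than a field; this is where I would invoke the relevant structural fact about self-dual codes over $S$ assumed in the ambient setting of \cite{kayayildiz}.
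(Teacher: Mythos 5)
Note first that this paper does not prove Theorem \ref{ext} at all: it is quoted from \cite{kayayildiz} without argument, so there is no in-text proof to compare against. Judged on its own terms, your orthogonality half is complete and correct: $\langle\rho_0,\rho_0\rangle=1+\langle X,X\rangle=0$, $\langle\rho_0,\rho_i\rangle=2y_i=0$ and $\langle\rho_i,\rho_j\rangle=(1+c^2)y_iy_j+\langle r_i,r_j\rangle=0$ all follow from characteristic $2$, $c^2=1$ and the self-orthogonality of $C$, and this gives $D\subseteq D^{\perp}$. That is the standard first half of the building-up argument.

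The counting half contains one step that genuinely fails as written. After the row operations $\rho_i\mapsto\rho_i+y_i\rho_0$ the new rows are $(0,cy_i,r_i+y_iX)$, and you assert that they ``project onto a copy of $C$ in the last $n$ coordinates.'' Over the rings this paper actually uses ($\F_2+u\F_2$ and $\F_4+u\F_4$) that projection need not be injective: for $v\in C$ and $t=\langle v,X\rangle$, self-orthogonality only yields $t^2=\langle v,v\rangle=0$, so a nonzero nilpotent $t$ with $tX\in C$ (e.g.\ $v=uX$ whenever $uX\in C$) produces a collision in the last $n$ coordinates. The count $|D|=|S|\,|C|$ is nonetheless correct, but you must retain the middle coordinate: the map $v=\sum\alpha_ir_i\mapsto\bigl(0,\,c\langle v,X\rangle,\,v+\langle v,X\rangle X\bigr)$ is well defined (because $\langle v,X\rangle$ depends only on the codeword $v$, not on the chosen coefficients $\alpha_i$) and injective precisely because $c$ is a unit. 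You should also make explicit that the identity $|D|\,|D^{\perp}|=|S|^{n+2}$ needs $S$ to be a finite Frobenius ring --- you flag this yourself; it holds for every ring used in this paper but is not implied by ``commutative of characteristic $2$ with identity.'' Both issues can be avoided entirely by proving $D^{\perp}\subseteq D$ directly: given $(a,b,Y)\in D^{\perp}$, orthogonality with $\rho_0$ gives $a=\langle Y,X\rangle$; one checks that $Z=Y+aX+bcX$ satisfies $\langle Z,r_i\rangle=0$ for all $i$, hence $Z=\sum\alpha_ir_i\in C^{\perp}=C$, and then $(a,b,Y)+ (a+bc)\rho_0+\sum\alpha_i\rho_i$ is supported on the first two coordinates, where orthogonality against $\rho_0$ and the relation $c^2=1$ force it to vanish. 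That route uses no cardinality facts and works verbatim under the stated hypotheses.
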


Another extension method which can be applied to generator matrices in
standard form is as follows.:

\begin{theorem}
$($\cite{kayayildiz}$)$ \label{idext}Let $C$ be a self-dual code generated
by $G=\left[ I_{n}|A\right] $ over $S$. If the sum of the elements in $i$-th
row of $A$ is $r_{i}$ then the matrix:%
\begin{equation*}
G^{\ast }=\left[
\begin{array}{cc|cccccc}
1 & 0 & x_{1} & \ldots & x_{n} & 1 & \ldots & 1 \\ \hline
y_{1} & cy_{1} & \multicolumn{3}{c}{} & \multicolumn{3}{c}{} \\
\vdots & \vdots &  & I_{n} &  &  & A &  \\
y_{n} & cy_{n} &  &  &  &  &  &
\end{array}%
\right] ,
\end{equation*}%
where $y_{i}=x_{i}+r_{i}$, $c$ is a unit with $c^{2}=1$, $X=\left(
x_{1},\ldots ,x_{n}\right) $ and $\left\langle X,X\right\rangle =1+n$,
generates a self-dual code $C^{\ast }$ over $S$.
\end{theorem}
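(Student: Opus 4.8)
The plan is to obtain Theorem~\ref{idext} as a direct specialization of Theorem~\ref{ext}, the only genuine content being a short inner-product computation carried out over a ring of characteristic $2$. First I would fix notation for the code $C$: since $G=[I_n\mid A]$ is an $n\times 2n$ matrix, $C$ has length $2n$, its $i$-th row is $g_i=(e_i\mid a_i)$ where $e_i\in S^n$ is the $i$-th standard basis vector and $a_i\in S^n$ is the $i$-th row of $A$, and by hypothesis $\langle a_i,\boldsymbol{1}\rangle=r_i$, where $\boldsymbol{1}\in S^n$ is the all-one vector (here $r_i$ denotes the scalar row sum, as in the statement). The idea is then to apply the extension of Theorem~\ref{ext} to $C$, to the given unit $c$ with $c^2=1$, and to the specific vector $\widetilde X=(x_1,\dots,x_n,1,\dots,1)=(X\mid\boldsymbol{1})\in S^{2n}$, where $X=(x_1,\dots,x_n)$.

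Next I would check that $\widetilde X$ satisfies the two hypotheses of Theorem~\ref{ext}. For the norm condition, $\langle\widetilde X,\widetilde X\rangle=\langle X,X\rangle+\langle\boldsymbol{1},\boldsymbol{1}\rangle=\langle X,X\rangle+n$, and since $S$ has characteristic $2$ we have $-1=1$, hence $1-n=1+n$ in $S$; thus the hypothesis $\langle X,X\rangle=1+n$ is exactly equivalent to $\langle\widetilde X,\widetilde X\rangle=1$. For the auxiliary scalars, the value $\langle g_i,\widetilde X\rangle$ required by Theorem~\ref{ext} works out to $\langle e_i,X\rangle+\langle a_i,\boldsymbol{1}\rangle=x_i+r_i$, which is precisely the quantity called $y_i$ in the statement of Theorem~\ref{idext}. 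So both hypotheses hold with the stated data.

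Finally I would invoke Theorem~\ref{ext}: $C$ is self-dual by hypothesis, so the theorem yields a self-dual code over $S$ of length $2n+2$ generated by
\[
\left[\begin{array}{cc|c}
1 & 0 & \widetilde X \\ \hline
y_1 & cy_1 & g_1 \\
\vdots & \vdots & \vdots \\
y_n & cy_n & g_n
\end{array}\right].
\]
Substituting $\widetilde X=(x_1,\dots,x_n,1,\dots,1)$ in the first row and stacking $g_i=(e_i\mid a_i)$ so that the lower-right block becomes $[I_n\mid A]$, this matrix is exactly $G^{\ast}$; hence $C^{\ast}=\langle G^{\ast}\rangle$ is self-dual, as claimed. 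There is no real obstacle here: the entire argument is the bookkeeping of the two inner products above together with the characteristic-$2$ identity $1-n=1+n$, after which Theorem~\ref{ext} does all the work.
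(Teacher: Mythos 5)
Your derivation is correct. Note that the paper itself gives no proof of Theorem~\ref{idext} --- both it and Theorem~\ref{ext} are quoted from \cite{kayayildiz} without argument --- so there is no in-paper proof to compare against; but your reduction is exactly the natural one and it is complete: the vector $\widetilde X=(X\mid\boldsymbol{1})\in S^{2n}$ satisfies $\langle\widetilde X,\widetilde X\rangle=\langle X,X\rangle+n\cdot 1_S=(1+n)+n=1$ in characteristic $2$, the scalars $\langle (e_i\mid a_i),\widetilde X\rangle=x_i+r_i$ match the $y_i$ of the statement, and Theorem~\ref{ext} then produces precisely $G^{\ast}$. The only point worth flagging is notational: the symbol $r_i$ denotes a row of $G$ in Theorem~\ref{ext} but a row sum of $A$ in Theorem~\ref{idext}, and you correctly disambiguate by writing $g_i$ for the rows, so no circularity or clash arises.
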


\section{Quadratic double circulant codes over $\mathbb{F}_{4}+u\mathbb{F}%
_{4}$}

We investigate QDC codes over $\mathbb{F}_{4}+u\mathbb{F}_{4}$ and the $%
\left( \mathbb{F}_{4}+u\mathbb{F}_{4}\right) $-lifts of QDC codes over $%
\mathbb{F}_{4}$. Note that a lift of a self-dual code may not be self-dual.
But considering the $\left( \mathbb{F}_{4}+u\mathbb{F}_{4}\right) $-lifts of
a self-dual code over $\mathbb{F}_{4}$ reduces the workload remarkably. For
instance, there are 8 free elements for a random double circulant code of
length $16$ over $\mathbb{F}_{4}+u\mathbb{F}_{4}$ which results in $%
16^{8}=4294967296$ possibilities. On the other hand, if we consider the
lifts of a unique double circulant code of length $16$ over $\mathbb{F}_{4}$
the number of possibilities reduces to $4^{8}=65536$. This is because for a
fixed element $a\in \mathbb{F}_{4}$ there are $4$ possible lifts $a+bu\in
\mathbb{F}_{4}+u\mathbb{F}_{4}$ where $b$ is an arbitrary element of $%
\mathbb{F}_{4}$. Results are obtained by computational algebra system MAGMA,
for more details we refer the reader to \cite{magma}. By using these
constructions we obtain some binary self-dual codes of lengths $56$ and $64$.

\subsection{Quadratic double circulant codes over $\mathbb{F}_{4}+u\mathbb{F}%
_{4}$}

QDC codes over $\mathbb{F}_{4}+u\mathbb{F}_{4}$ is a large family of
self-dual codes. In the following theorem we list some of these which
correspond to self-dual codes.

\begin{theorem}
Let $p$ be a prime then the following holds:\newline
if $p=8k+7$ then the codes
\begin{equation*}
\mathcal{P}_{p}\left( u,1+\omega ,\omega +u\omega \right) \text{ and }%
\mathcal{B}_{p}\left( 1+u+u\omega ,\omega +u\omega ,1+\omega ,u,1+u\omega
,1+u\omega \right)
\end{equation*}%
are Type II codes over $\mathbb{F}_{4}+u\mathbb{F}_{4}$ , \newline
if $p=8k+3$ then the codes
\begin{eqnarray*}
&&\mathcal{B}_{p}\left( 1,u+u\omega ,1,u+u\omega ,1+u+u\omega ,1+u+u\omega
\right) \text{ and } \\
&&\mathcal{B}_{p}\left( 1+\omega ,\omega +u\omega ,1+\omega ,\omega
,1+\omega +u\omega ,1+u+\omega \right)
\end{eqnarray*}%
are respectively TypeI and Type II codes over $\mathbb{F}_{4}+u\mathbb{F}%
_{4} $.
\end{theorem}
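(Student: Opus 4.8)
The plan is to establish self-duality of each listed code by a direct matrix identity derived from Theorem~\ref{qdc}, and then to decide Type~I versus Type~II by inspecting Lee weights on a generating set. Write $R:=\mathbb{F}_{4}+u\mathbb{F}_{4}$. Since $R$ has characteristic $2$, every integer coefficient occurring in Theorem~\ref{qdc} may be reduced modulo~$2$; moreover both prime families satisfy $p\equiv 3\pmod{4}$ (since $8k+7=4(2k+1)+3$ and $8k+3=4(2k)+3$), so only the \emph{second} formula of Theorem~\ref{qdc} is ever needed, and writing $p=4\ell+3$, the parameter $\ell$ is odd when $p=8k+7$ and even when $p=8k+3$; this parity is the only way $p$ enters the algebra. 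Finally, each generator matrix $G$ contains an identity block, so $C$ is free of full rank, and since $R$ is a finite Frobenius ring, $|C|=|C^{\perp}|$; hence it suffices to prove $GG^{T}=0$.

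For the pure code $\mathcal{P}_{p}(a,b,c)$, with $G=[\,I_{p}\mid Q_{p}(a,b,c)\,]$, the condition $GG^{T}=0$ is $Q_{p}(a,b,c)Q_{p}(a,b,c)^{T}=I_{p}=Q_{p}(1,0,0)$, which by the second formula of Theorem~\ref{qdc} (and $2\ell+1$ being odd) reduces to the two identities in $R$
\[
a^{2}+b^{2}+c^{2}=1,\qquad ab+ac+\ell\,(b^{2}+c^{2})+bc=0,
\]
to be checked for $(a,b,c)=(u,\,1+\omega,\,\omega+u\omega)$ with $\ell$ odd, using only $u^{2}=0$ and $\omega^{2}=\omega+1$. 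For a bordered code $\mathcal{B}_{p}(a,b,c,\lambda,\beta,\gamma)$ with right-hand block
\[
M=\begin{bmatrix}\lambda & \beta\,\boldsymbol{1}\\ \gamma\,\boldsymbol{1}^{T} & Q_{p}(a,b,c)\end{bmatrix},
\]
the condition $GG^{T}=0$ is $MM^{T}=I_{p+1}$; expanding in blocks and using $\boldsymbol{1}\boldsymbol{1}^{T}=p$, $\boldsymbol{1}^{T}\boldsymbol{1}=J_{p}=Q_{p}(1,1,1)$, and $\boldsymbol{1}Q_{p}^{T}=\sigma\boldsymbol{1}$, $Q_{p}\boldsymbol{1}^{T}=\sigma\boldsymbol{1}^{T}$, where $\sigma=a+\tfrac{p-1}{2}(b+c)$ is the common row sum of $Q_{p}(a,b,c)$, one finds that $MM^{T}=I_{p+1}$ is equivalent to
\[
\lambda^{2}+p\,\beta^{2}=1,\qquad \lambda\gamma+\beta\sigma=0,\qquad \gamma^{2}J_{p}+Q_{p}(a,b,c)Q_{p}(a,b,c)^{T}=I_{p}.
\]
By the previous paragraph and the additivity $Q_{p}(x_{1},x_{2},x_{3})+Q_{p}(y_{1},y_{2},y_{3})=Q_{p}(x_{1}+y_{1},x_{2}+y_{2},x_{3}+y_{3})$, the last equation says exactly $\gamma^{2}+X_{1}=1$ and $\gamma^{2}+X_{2}=\gamma^{2}+X_{3}=0$, where $(X_{1},X_{2},X_{3})$ is the triple produced by Theorem~\ref{qdc}. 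For each of the three bordered codes one then substitutes the prescribed sextuple together with the appropriate parity of $\ell$ and verifies these scalar identities in $R$.

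For the Type~I/II classification, recall that $w_{L}(z)$ equals the Hamming weight of $\phi\psi(z)\in\mathbb{F}_{2}^{4}$, so one first tabulates the values $w_{L}(\alpha z)$ for $\alpha\in\{1,\omega,u,u\omega\}$ and $z$ ranging over the few ring elements occurring in the constructions. Since $\phi\psi$ is $\mathbb{F}_{2}$-linear and orthogonality-preserving, $\phi\psi(C)$ is a binary self-orthogonal code spanned over $\mathbb{F}_{2}$ by the vectors $\phi\psi(\alpha r)$ with $r$ a row of $G$ and $\alpha\in\{1,\omega,u,u\omega\}$; and in a binary self-orthogonal code the property of being \emph{doubly even} propagates from any spanning set to the whole code, because $\operatorname{wt}(x+y)\equiv\operatorname{wt}(x)+\operatorname{wt}(y)\pmod{4}$ whenever $x\cdot y$ is even. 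Hence $C$ is Type~II if and only if $w_{L}(\alpha r)\equiv 0\pmod{4}$ for all such $r$ and $\alpha$. For a row of $[\,I_{p}\mid Q_{p}(a,b,c)\,]$ this quantity is $w_{L}(\alpha)+w_{L}(\alpha a)+\tfrac{p-1}{2}\bigl(w_{L}(\alpha b)+w_{L}(\alpha c)\bigr)$, which modulo~$4$ collapses to a signed sum of tabulated values once one uses $\tfrac{p-1}{2}\equiv -1\pmod{4}$ if $p\equiv 7\pmod{8}$ and $\tfrac{p-1}{2}\equiv 1\pmod{4}$ if $p\equiv 3\pmod{8}$; the bordered rows carry one extra term, namely $w_{L}(\alpha\gamma)$, and $w_{L}(\alpha\lambda)+p\,w_{L}(\alpha\beta)$ for the border row (with $p\equiv -1\pmod{4}$). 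Substituting the prescribed elements shows that all these residues vanish for the codes declared Type~II; for the code declared Type~I one instead exhibits a single row $r$ and scalar $\alpha$ with $w_{L}(\alpha r)\equiv 2\pmod{4}$, which produces a codeword of Lee weight not divisible by~$4$.

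The main obstacle is bookkeeping rather than any conceptual difficulty. In the bordered case one has to control simultaneously the contributions of $Q_{p}Q_{p}^{T}$ (via Theorem~\ref{qdc}), the rank-one blocks $J_{p}$ and $\boldsymbol{1}$, and the row sum $\sigma$, while tracking the parity of $\ell$, which flips between the $p=8k+7$ and $p=8k+3$ families and thereby changes the term $\ell(b^{2}+c^{2})$; similarly, in the weight computation the reduction of $\tfrac{p-1}{2}$ modulo~$4$ depends on the residue of $p$ modulo~$8$, so the two families must be treated separately. None of this is deep, but it requires the arithmetic of $R$ (via $u^{2}=0$, $\omega^{2}=\omega+1$, and $\omega\bar{\omega}=1$) to be executed carefully for each of the six parameter choices in the statement.
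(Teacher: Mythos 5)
Your proposal is correct and follows essentially the same route as the paper: self-duality is reduced via Theorem~\ref{qdc} to a handful of scalar identities in $\mathbb{F}_4+u\mathbb{F}_4$ (the paper carries out the pure case explicitly and declares the bordered cases analogous), and the Type~I/II distinction is settled by computing Lee weights of generating rows and invoking the doubly-even propagation in binary self-orthogonal codes. If anything, your write-up is more careful than the paper's on two points: you retain the $\ell(b^{2}+c^{2})$ term that the paper's displayed computation drops, and you correctly note that the $\mathbb{F}_2$-span of the Gray image requires checking $\alpha r$ for all $\alpha$ in an $\mathbb{F}_2$-basis of the ring, not just the rows $r$ themselves.
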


\begin{proof}
Self duality of the codes follows by Theorem \ref{qdc} and the type of the
code follows by the weights of the elements. For instance
\begin{eqnarray*}
&&Q_{p}\left( u,1+\omega ,\omega +u\omega \right) Q_{p}\left( u,1+\omega
,\omega +u\omega \right) ^{T} \\
&=&Q_{p}(u^{2}+\left( 1+\omega \right) ^{2}+\left( \omega +u\omega \right)
^{2},u\left( 1+\omega \right) +u\left( \omega +u\omega \right) +\left(
1+\omega \right) \left( \omega +u\omega \right) , \\
&&u\left( 1+\omega \right) +u\left( \omega +u\omega \right) +\left( 1+\omega
\right) \left( \omega +u\omega \right) ) \\
&=&Q_{p}\left( 1,0,0\right) =I_{p}.
\end{eqnarray*}%
Hence, the code $\mathcal{P}_{p}\left( u,1+\omega ,\omega +u\omega \right) $
is self-dual. In addition, $wt\left( 1\right) =2,\ wt\left( u\right) =4,\
wt\left( 1+\omega \right) =1$ and $wt\left( \omega +u\omega \right) =2$ so
the weight of any row of $P_{p}\left( u,1+\omega ,\omega +u\omega \right) $
is $2+4+\frac{p-1}{2}1+\frac{p-1}{2}2=6+4k+3+8k+7=4\left( 3k+4\right) $.
Thus all codewords in $\mathcal{P}_{p}\left( u,1+\omega ,\omega +u\omega
\right) $ have weight divisible by 4. The proofs of the other cases are
analogous and skipped.
\end{proof}

We finish this section by presenting some examples of quadratic double
circulant codes over $\mathbb{F}_{4}+u\mathbb{F}_{4}$ in Table \ref{tab:QDC}%
.
\begin{table}[tbp]
\caption{Quadratic double circulant codes over $\mathbb{F}_{4}+u\mathbb{F}%
_{4}$ }
\label{tab:QDC}
\begin{center}
\begin{tabular}{|c|c|}
\hline the code over $\mathbb{F}_{4}+u\mathbb{F}_{4}$ & the binary
image; $\phi \circ \psi \left( C\right) $ \\ \hline
$\mathcal{B}_{3}\left( 1+\omega ,\omega +u\omega ,1+\omega ,\omega
,1+\omega
+u\omega ,1+u+\omega \right) $ & $\left[ 32,16,8\right] _{2}$ Type II \\
\hline $\mathcal{B}_{3}\left( 1,u+u\omega ,1,u+u\omega ,1+u+u\omega
,1+u+u\omega \right) $ & $\left[ 32,16,8\right] _{2}$ Type I \\
\hline $\mathcal{P}_{7}\left( u,1+\omega ,\omega +u\omega \right) $
& $\left[ 56,28,12\right] _{2}$ Type II \\ \hline
$\mathcal{B}_{7}\left( 1+u+u\omega ,\omega +u\omega ,1+\omega
,u,1+u\omega ,1+u\omega \right) $ & $\left[ 64,32,12\right] _{2}$
Type II \\ \hline $\mathcal{B}_{11}\left( 1+\omega ,\omega +u\omega
,1+\omega ,\omega ,1+\omega +u\omega ,1+u+\omega \right) $ & $\left[
96,48,12\right] _{2}$ Type II \\ \hline $\mathcal{B}_{11}\left(
1,u+u\omega ,1,u+u\omega ,1+u+u\omega ,1+u+u\omega \right) $ &
$\left[ 96,48,12\right] _{2}$ Type I \\ \hline
\end{tabular}%
\end{center}
\end{table}

\subsection{$\left( \mathbb{F}_{4}+u\mathbb{F}_{4}\right) $-lifts of
quadratic double circulant code $\mathcal{P}_{7}\left( 0,1+\protect\omega ,%
\protect\omega \right) $ over $\mathbb{F}_{4}$}

We consider the lifts of\ the QDC code $\mathcal{P}_{7}\left( 0,1+\omega
,\omega \right) $ over $\mathbb{F}_{4}$ whose Gray image is a self-dual $%
\left[ 28,14,6\right] _{2}$ code. The code is lifted to $\mathbb{F}_{4}+u%
\mathbb{F}_{4}$ and as a result double circulant codes with binary image $%
\left[ 56,28,10\right] _{2}$ are obtained.

In the following let $\mathcal{K}_{i}$ be the $\mathbb{F}_{4}+u\mathbb{F}%
_{4} $-code generated by $\left[ I_{7}|A_{i}\right] $ where $A_{i}$ is the
circulant matrix with first row $R_{i}$.

\begin{equation*}
\begin{tabular}{|c|c|}
\hline
$\mathcal{K}_{i}$ & $R_{i}$ \\ \hline
$\mathcal{K}_{1}$ & $(u\omega ,1+\omega +u\omega ,1+u+\omega ,\omega
+u\omega ,1+u+\omega +u\omega ,\omega ,u+\omega )$ \\ \hline
$\mathcal{K}_{2}$ & $\left( u+u\omega ,1+\omega +u\omega ,1+u+\omega
+u\omega ,u+\omega ,1+\omega ,\omega ,u+\omega +u\omega \right) $ \\ \hline
$\mathcal{K}_{3}$ & $\left( u\omega ,1+\omega ,1+u+\omega ,\omega +u\omega
,1+\omega +u\omega ,u+\omega +u\omega ,\omega \right) $ \\ \hline
\end{tabular}%
\end{equation*}%
The binary images are $[56,28,10]_2$ self-dual codes, these are extended in
Section \ref{58} in order to obtain new binary codes of length $58$.

\subsection{$\left( \mathbb{F}_{4}+u\mathbb{F}_{4}\right) $-lifts of
quadratic double circulant code $\mathcal{B}_{7}\left( 1,\protect\omega ,1+%
\protect\omega ,0,1,1\right) $ over $\mathbb{F}_{4}$}

The possible weight enumerators of Type I extremal self-dual codes of length
$64$ are characterized in \cite{conway} as:
\begin{eqnarray*}
W_{64,1} &=&1+(1312+16\beta )y^{12}+(22016-64\beta )y^{14}+\cdots \text{
where }14\leq \beta \leq 104, \\
W_{64,2} &=&1+(1312+16\beta )y^{12}+(23040-64\beta )y^{14}+\cdots .\text{%
where }0\leq \beta \leq 277\text{.}
\end{eqnarray*}%
Recently, codes with $\beta =$29, 39, 53 and 60 in $W_{64,1}$ and codes with
$\beta =$51, 58 in $W_{64,2}$ are constructed in \cite{yankov64} and a code
with $\beta =80$ in $W_{64,2}$ is constructed in \cite{karadenizfour}.
Together with these the existence of such codes is now known for $\beta =$%
14, 18, 22, 25, 32, 36, 39, 44, 46, 53, 60, 64 in $W_{64,1}$ and for $\beta =
$0, 1, 2, 4, 5, 6, 8, 9, 10, 12, 13, 14, 16, 17, 18, 20, 21, 22, 23, 24, 25,
28, 29, 30, 32, 33, 36, 37, 38, 40, 41, 44, 48, 51, 52, 56, 58, 64, 72, 80,
88, 96, 104, 108, 112, 114, 118, 120, 184 in $W_{64,2}$.

The code $\mathcal{B}_{7}\left( 1,\omega ,1+\omega ,0,1,1\right) $ over $%
\mathbb{F}_{4}$ has a binary image as the $\left[ 32,16,8\right] _{2}$
self-dual code with an automorphism group of order $2^{15}\times 3^{2}\times
5\times 7$. The code is lifted to $\mathbb{F}_{4}+u\mathbb{F}_{4}$ and as
binary image of the lifts extremal Type I codes of length $64$ listed in
Table \ref{tab:64codes} are obtained. In Table \ref{tab:64codes} $\mathcal{L}%
_{i}$ is the code over $\mathbb{F}_{4}+u\mathbb{F}_{4}$ generated by $G_{i}=%
\left[
\begin{array}{c|c}
I_{8} &
\begin{array}{cc}
a & b\times \boldsymbol{1} \\
c\times \boldsymbol{1}^{T} & A_{i}%
\end{array}%
\end{array}%
\right] $ where $A_{i}$ is the circulant matrix with first row $R_{i}$ and $%
\boldsymbol{1}$ is the all $1$ row vector of length $7$ and the binary code $%
\phi \circ \psi \left( \mathcal{L}_{i}\right) $ has weight enumerator with $%
\beta _{i}$ in $W_{64,1}$.
\begin{table}[tbp]
\caption{Codes in $W_{64,1}$ as lifts of $B_{7}\left( 1,\protect\omega ,1+%
\protect\omega ,0,1,1\right) $ over $\mathbb{F}_{4}$ }
\label{tab:64codes}
\begin{center}
\begin{tabular}{|c|c|c|}
\hline
$\mathcal{L}$ & $R_{i}$; first row of $A_{i}$ &  \\ \cline{2-2}
& {$\left( a,b,c\right) $; the border} & $\beta $ \\ \hline\hline
$\mathcal{L}_{1}$ & $\left( 1,u+\omega +u\omega ,\omega ,1+\omega +u\omega
,\omega +u\omega ,1+\omega ,1+u+\omega \right) $ &  \\ \cline{2-2}
& {$\left( u\omega ,1+u+u\omega ,1+u\omega \right) $} & $18$ \\ \hline\hline
$\mathcal{L}_{2}$ & $\left( 1+u\omega ,u+\omega ,\omega ,1+u+\omega +u\omega
,u+\omega +u\omega ,1+u+\omega ,1+\omega \right) $ &  \\ \cline{2-2}
& {$\left( u\omega ,1+u+u\omega ,1+u\omega \right) $} & $32$ \\ \hline\hline
$\mathcal{L}_{3}$ & $\left( 1+u,\omega ,\omega ,1+\omega ,\omega ,1+u+\omega
+u\omega ,1+u+\omega \right) $ &  \\ \cline{2-2}
& {$\left( u+u\omega ,1+u+u\omega ,1+u\omega \right) $ } & $46$ \\
\hline\hline
$\mathcal{L}_{4}$ & $\left( 1+u+u\omega ,\omega +u\omega ,\omega ,1+\omega
+u\omega ,u+\omega ,1+\omega ,1+u+\omega \right) $ &  \\ \cline{2-2}
& {$\left( u+u\omega ,1+u\omega ,1+u\omega \right) $} & $60$ \\ \hline
\end{tabular}%
\end{center}
\end{table}

\begin{remark}
The code with $\beta =60$ in $W_{64,1}$ was recently constructed in \cite%
{yankov64} and all the codes in Table \ref{tab:64codes} have an automorphism
group of order $2^{2}7$.
\end{remark}

\section{New extremal binary self-dual codes of lengths 58, 66 and 68}

In this section, $\mathbb{F}_{2}$ and $\left( \mathbb{F}_{2}+u\mathbb{F}%
_{2}\right) $-extensions of the codes constructed in the previous
section are investigated. We were able to obtain 7 new binary codes
of length 58, 24 new binary codes of length 66 and 29 new binary
codes of length 68. All these codes have new weight enumerators.

\subsection{New extremal binary self-dual codes of lengths 58\label{58}}

The possible weight enumerators of an extremal self-dual binary code of
length $58$ are characterized in \cite{conway}\ as follows:%
\begin{eqnarray*}
W_{58,1} &=&1+(165-2\beta )y^{10}+(5078+2\beta )y^{12}+\cdots \text{ where }%
0\leq \beta \leq 82 \\
W_{58,2} &=&1+(319-24\beta -2\gamma )y^{10}+\cdots \text{ where }0\leq \beta
\leq 11\text{ and }0\leq \gamma \leq 159-2\beta .
\end{eqnarray*}%
Recently, Yankov and Lee have presented the known extremal binary self-dual
codes of length $58$ and they obtain new ones in \cite{yankov}. By
considering lifts of $\left[ 8,4,4\right] _{2}$ Hamming code Karadeniz and
Kaya constructed 11 new codes in $W_{58,2}$ in \cite{karadeniz58}. Together
with the ones added from \cite{yankov} and \cite{karadeniz58}, the existence
of such codes is known for $\beta =55$ in $W_{58,1}$ and for $\beta =0$ with
$\gamma \in \{2m|m=$0, 1, 8, 9, 10, 15, 16, 34, 71, 79 or $18\leq m\leq 64\},
$ $\beta =1$ with $\gamma \in \left\{ 2m|21\leq m\leq 57\right\} $ and $%
\beta =2$ with $\gamma \in \{2m|m=$16, 18, 20, 21, 22, 19, 46, 49 or $24\leq
m\leq 44\}$ in $W_{58,2}.$

In this section, we obtain the codes in $W_{58,2}$ for $\beta =0$ with $%
\gamma =28$, $\beta =1$ with $\gamma =28,32,34,38,40$ and for $\beta =2$
with $\gamma =46$.

The binary Gray images of the codes $\mathcal{K}_{1},\mathcal{K}_{2}$ and $%
\mathcal{K}_{3}$ are self-dual $\left[ 56,28,10\right] _{2}$ codes and when
they are extended by Theorem \ref{ext} seven extremal self-dual codes of
length $58$ with new weight enumerators are obtained which are listed in
Table \ref{tab:58codes}.
\begin{table}[]
\caption{ New codes in $W_{58,2}$ by Theorem \protect\ref{ext} (7 codes)}
\label{tab:58codes}
\begin{center}
\begin{tabular}{|c|c|c|c|}
\hline
$\mathcal{K}$ & $X=(x_1,x_2,...,x_{56})$ & $\beta $ & $\gamma $ \\ \hline
$\mathcal{K}_{1}$ & $%
10011100000111010110110110010100100100010011110101110000 $ & $1$ & $32$ \\
\hline
$\mathcal{K}_{1}$ & $%
10010010011100001101111010001111110011001111000101001111 $ & $1$ & $38$ \\
\hline
$\mathcal{K}_{1}$ & $%
00111010100110000101110000110100011111011000001011001000 $ & $1$ & $40$ \\
\hline
$\mathcal{K}_{1}$ & $%
00111111011100001001000010010000010000010000101010001000 $ & $2$ & $46$ \\
\hline
$\mathcal{K}_{2}$ & $%
11010000110101000111011001001100010010100011100111101010 $ & $0$ & $28$ \\
\hline
$\mathcal{K}_{2}$ & $%
01101011110110000100001101010100111100100100110110010111 $ & $1$ & $34$ \\
\hline
$\mathcal{K}_{3}$ & $%
01110000010011000101111110000111010100000010101001100110 $ & $1$ & $28$ \\
\hline
\end{tabular}%
\end{center}
\end{table}

\subsection{New extremal binary self dual codes of length $66$}

A self-dual $\left[ 66,33,12\right] _{2}$-code has a weight enumerator in
one of the following forms (\cite{dougherty1})%
\begin{eqnarray*}
W_{66,1} &=&1+\left( 858+8\beta \right) y^{12}+\left( 18678-24\beta \right)
y^{14}+\cdots \text{ where }0\leq \beta \leq 778, \\
W_{66,2} &=&1+1690y^{12}+7990y^{14}+\cdots \text{ } \\
\text{and }W_{66,3} &=&1+\left( 858+8\beta \right) y^{12}+\left(
18166-24\beta \right) y^{14}+\cdots \text{ where }14\leq \beta \leq 756,
\end{eqnarray*}%
Recently, five new codes in $W_{66,1}$ are constructed in \cite%
{karadenizfour}. For a list of known codes in $W_{66,1}$ we refer to \cite%
{karadenizfour,tsai} and references therein.

First codes with a weight enumerator in $W_{66,3}$ were discovered in \cite%
{tsai} and recently 14 codes were discovered in \cite{karadeniz66} together
with these the existence of codes in $W_{66,3}$ is known for $\beta =$28,
29, 30, 31, 32, 33, 34, 49, 50, 54, 55, 56, 57, 58, 59, 62, 63 and 66.

In this work, we construct 24 codes with new weight enumerators, more
precisely the codes with $\beta =$35, 36, 37, 38, 43, 44, 47, 48, 51, 60,
67, 70, 71, 73, 74, 75, 76, 77, 78, 79 and 80 in $W_{66,3}$.
\begin{table}[tbp]
\caption{New codes in $W_{66,3}$ by Theorem \protect\ref{idext} (13 codes)}
\label{tab:66identity}
\begin{center}
\begin{tabular}{|c|c|c|}
\hline
$C$ & $X=(x_1,x_2,...,x_{32})$ & $\beta $ \\ \hline
$\mathcal{L}_{1}$ & $01101010101010101101101110100100$ & $35$ \\ \hline
$\mathcal{L}_{1}$ & $00100101110100100000100110110001$ & $36$ \\ \hline
$\mathcal{L}_{1}$ & $10110000001001110101100111100001$ & $37$ \\ \hline
$\mathcal{L}_{1}$ & $11100001011100101111010110010001$ & $38$ \\ \hline
$\mathcal{L}_{2}$ & $01011001111111001001101010111001$ & $43$ \\ \hline
$\mathcal{L}_{2}$ & $10110110001011100111101100111001$ & $44$ \\ \hline
$\mathcal{L}_{2}$ & $00000010110011001100010111000011$ & $47$ \\ \hline
$\mathcal{L}_{2}$ & $01101001101001111110011111100100$ & $48$ \\ \hline
$\mathcal{L}_{2}$ & $10010010110101101101100111011110$ & $51$ \\ \hline
$\mathcal{L}_{3}$ & $11001001111101100011101001000101$ & $60$ \\ \hline
$\mathcal{L}_{3}$ & $01101001010111101100000011110011$ & $67$ \\ \hline
$\mathcal{L}_{4}$ & $01010111010000001100100111001110$ & $70$ \\ \hline
$\mathcal{L}_{4}$ & $00101101101000101100010011101100$ & $75$ \\ \hline
\end{tabular}%
\end{center}
\end{table}

\begin{remark}
In order to apply the extension in Theorem \ref{idext} the generator
matrices for the binary image of $\mathcal{L}_{i}$ are converted into
standard form $\left[ I_{32}|A_{i}\right] $ and the matrices are available
online at \cite{kaya}. So, in Table \ref{tab:66identity} the codes are
generated by the matrices of the form
\begin{equation*}
\left[
\begin{array}{cc|cccccc}
1 & 0 & x_{1} & \ldots & x_{32} & 1 & \ldots & 1 \\ \hline
y_{1} & y_{1} & \multicolumn{3}{c}{} & \multicolumn{3}{c}{} \\
\vdots & \vdots &  & I_{32} &  &  & A_{i} &  \\
y_{32} & y_{32} &  &  &  &  &  &
\end{array}%
\right] .
\end{equation*}
\end{remark}

In addition, as binary extensions by Theorem \ref{ext} we obtained 11 new
codes in $W_{66,3}$ which are listed in Table \ref{tab:66codes}.
\begin{table}[tbp]
\caption{New codes in $W_{66,3}$ by Theorem \protect\ref{ext} (11 codes)}
\label{tab:66codes}
\begin{center}
\begin{tabular}{|c|c|c|}
\hline
$C$ & $X=(x_1,x_2,...,x_{64})$ & $\beta $ \\ \hline
$\mathcal{L}_{2}$ & $%
1100101011011100011110010011111101101001011100101110010100111111$ & $45$ \\
\hline
$\mathcal{L}_{2}$ & $%
0011110011001011010001000001101110011100101111000010000111011100$ & $46$ \\
\hline
$\mathcal{L}_{3}$ & $%
0000100000001000110000011100010000001011101000001000001011110011$ & $61$ \\
\hline
$\mathcal{L}_{4}$ & $%
0001101001010010010010111101111101111110010111100001000011010001$ & $71$ \\
\hline
$\mathcal{L}_{4}$ & $%
1111101001110110001000111011111101101101101011011000011111101110$ & $73$ \\
\hline
$\mathcal{L}_{4}$ & $%
1110011001010110000001010111001000001110011111010101110110010101$ & $74$ \\
\hline
$\mathcal{L}_{4}$ & $%
1010110101111010100111100111010001100000011100101110000000011001$ & $76$ \\
\hline
$\mathcal{L}_{4}$ & $%
0111111111011001001011011001001111010101011001011110110101000010$ & $77$ \\
\hline
$\mathcal{L}_{4}$ & $%
1100101100011101111011110101010011000001001010101111000111101111$ & $78$ \\
\hline
$\mathcal{L}_{4}$ & $%
1011110101001100001100101110011000001110100011010110001011110011$ & $79$ \\
\hline
$\mathcal{L}_{4}$ & $%
0000100100100100111100111011011010100100000001000101100111110110$ & $80$ \\
\hline
\end{tabular}%
\end{center}
\end{table}

\begin{remark}
In Tables \ref{tab:58codes} and \ref{tab:66codes} the codes are generated by
the matrices of the form
\begin{equation*}
\left[
\begin{array}{cc|c}
1 & 0 & X \\ \hline
y_{1} & y_{1} &  \\
\vdots & \vdots & \phi \circ \psi \left( C\right) \\
y_{k} & y_{k} &
\end{array}%
\right] \text{.}
\end{equation*}
\end{remark}

\subsection{New extremal binary self dual codes of length $68$}

The weight enumerator of an extremal binary self-dual code code of length 68
is in one of the following forms (\cite{dougherty1}):
\begin{eqnarray*}
W_{68,1} &=&1+\left( 442+4\beta \right) y^{12}+\left( 10864-8\beta \right)
y^{14}+\cdots \text{ where }104\leq \beta \leq 1358\text{,} \\
W_{68,2} &=&1+\left( 442+4\beta \right) y^{12}+\left( 14960-8\beta
-256\gamma \right) y^{14}+\cdots \text{ }
\end{eqnarray*}%
where $0\leq \gamma \leq 11$ and $14\gamma \leq \beta \leq 1870-32\gamma $.
Tsai et al. constructed a substantial number of codes in both possible
weight enumerators in \cite{tsai}. Recently, 178 new codes are obtained in
\cite{kayayildiz} and $28$ new codes including the first examples with $%
\gamma =4$ and $\gamma =6$ in $W_{68,2}$ are obtained in \cite{karadeniz68}.
Together with the ones in \cite{karadeniz68, kayayildiz} codes exists for $%
W_{68,2}$ when $\gamma =0$ and $\beta =$38, 40, 44,...,139, 141, 142, 143,
145, 147, 148, 149, 151, 153, 170, 204, 238, 272; $\gamma =1$ and $\beta =$%
54, 56, 58, 60, ..., 66, 68, 70, 72,\ldots , 115, 118, 119, 120, 123, 125,
126, 129, 132, 133, 135, 137, ...,149, 150, 151, 153, 155, 159; $%
\gamma =2$ and $\beta =$65, 68, 71, 77, 85, 87, 89, 91, 93, 95, 97,
99, 101, 103, 105, 109, 111, 115, 117, 119, 121, 123, 125, 127, 129,
131, 133, 135, 137, 139, 145, 151, 153, 155, 158, 160, 162 or $\beta
\in \left\{ 2m|37\leq m\leq 68,\text{ }70\leq m\leq 76\right\} $;
$\gamma =3$ and $\beta =$88, 90, 96, 100, 102, 104, 108, 112, 114,
116, 117, 126, 127, 128, 130, 133, 136, 137, 138, 140, 141, 142,
144, 145, 147, 148, 149, 153, 154, 158, 159, 160, 162, 176, 188,
193, 196; $\gamma =4$ and $\beta =$102, 110, 116, 120, 122, 124,
128, 130, 134, 136, 138, 140, 142, 150, 152, 154, 156, 158, 160,
162, 164, 166, 168, 170, 172, 174, 176, 180 and $\gamma =6$ with
$\beta =$138, 154, 156, 158, 162, 176, . For a list of known codes
in $W_{68,1}$ we refer to \cite{tsai}.

In this work as binary images of $\left( \mathbb{F}_{2}+u\mathbb{F}%
_{2}\right) $-extensions of codes in Table \ref{tab:68codes} we
obtain 29
codes with new weight enumerators in $W_{68,2}$, more precisely codes with $%
\gamma =1$, $\beta =$67, 69, 71, 116, 117, 121, 122, 124, 127, 128,
130, 131, 134, 136, 157; with $\gamma =2$, $\beta =$107, 113, 143,
147, 149, 154, 156, 159 and codes with rare parameters $\gamma =3$,
$\beta =$101, 110, 122, 123, 132, 156.
\begin{table}[tbp]
\caption{New codes in $W_{68,2}$ by Theorem \protect\ref{ext} for $\mathbb{F}%
_{2}+u\mathbb{F}_{2}$ (29 codes) } \label{tab:68codes}
\begin{center}
\begin{tabular}{|c|c|c|c|c|}
\hline
$C$ & $X=(x_1,x_2,...,x_{32})$ & $c$ & $\gamma $ & $\beta $ \\ \hline
$\mathcal{L}_{1}$ & $[0300u01333u330uuu10011100131u111]$ & $1+u$ & $1$ & $67$
\\ \hline
$\mathcal{L}_{1}$ & $[33010u0u003u11u013331uu13u130031]$ & $1$ & $1$ & $69$
\\ \hline
$\mathcal{L}_{1}$ & $[u311uu0uu0101u01u0000u11310uu1u0]$ & $1$ & $1$ & $71$
\\ \hline
$\mathcal{L}_{1}$ & $[3u0uu1u33030110031u1330u0011uuu1]$ & $1+u$ & $2$ & $%
107 $ \\ \hline
$\mathcal{L}_{1}$ & $[013311u1uuu1u103uu111300u3100u31]$ & $1$ & $3$ & $101$
\\ \hline
$\mathcal{L}_{1}$ & $[00311u000u0uu101u301u13030111000]$ & $1$ & $3$ & $110$
\\ \hline
$\mathcal{L}_{1}$ & $[0 u 1 1 1 u 0 0 0 u 0 0 0 3 u 1 0 3 u 3 u 1 1
0 3 0 1 3 1 u0 u]$ & $1$ & $3$ & $122$
\\ \hline
$\mathcal{L}_{1}$ & $[111u010u0u003u113001010103111031]$ & $1$ & $3$
& $123$
\\ \hline
$\mathcal{L}_{2}$ & $[u11u0303u3u0113330uuu3031110u130]$ & $1+u$ & $1$ & $%
117 $ \\ \hline
$\mathcal{L}_{2}$ & $[30033u31u30301133uu13u311311101u]$ & $1$ & $2$ & $113$
\\ \hline
$\mathcal{L}_{2}$ & $[11101u3001100uu3u133u1u3001u1130]$ & $1+u$ & $3$ & $%
132 $ \\ \hline
$\mathcal{L}_{3}$ & $[uu3113000031u11310011333u3u13u10]$ & $1$ & $1$ & $116$
\\ \hline
$\mathcal{L}_{3}$ & $[30113uuuuu33uu03u1u3uu311u00u0u1]$ & $1$ & $1$ & $124$
\\ \hline
$\mathcal{L}_{3}$ & $[313310uu3010u3003u003030u3031003]$ & $1+u$ & $1$ & $%
134 $ \\ \hline
$\mathcal{L}_{4}$ &
$[1003u10u1030u10101u1001u00uuuu1u]$ & $1$ & $1$ & $121$
\\ \hline
$\mathcal{L}_{4}$ & $[1000133u3010013uu13uuu11u031uuu1]$ & $1$ & $1$ & $122$
\\ \hline
$\mathcal{L}_{4}$ & $[130u31133u31uu30u100u03u3110u31u]$ & $1+u$ & $1$ & $%
127 $ \\ \hline
$\mathcal{L}_{4}$ & $[101103310u133003000033u3uu111uu1]$ & $1+u$ & $1$ & $%
128 $ \\ \hline
$\mathcal{L}_{4}$ & $[30u103313u031100u13uu101u03011u3]$ & $1$ & $1$ & $130$
\\ \hline
$\mathcal{L}_{4}$ & $[1u1u10uu0uu0u31u0u103u133u11333u]$ & $1+u$ & $1$ & $%
131 $ \\ \hline
$\mathcal{L}_{4}$ & $[10u011u001u310300330u03uu3uuu101]$ & $1+u$ & $1$ & $%
136 $ \\ \hline
 $\mathcal{L}_{4}$ &
$[330u0113u0uu3333u003u000uu010u33]$ & $1$ & $1$ & $157$
\\ \hline
$\mathcal{L}_{4}$ & $[011uu103103111u00030031u0110331u]$ & $1$ & $2$ & $143$
\\ \hline
$\mathcal{L}_{4}$ & $[00u1u0u1u1u30130u31u030011u00130]$ & $1+u$ & $2$ & $%
147 $ \\ \hline
$\mathcal{L}_{4}$ & $[u u u 1 0 u u 3 0 1 0 3 u 3 1 0 u 3 1 0 u 3 u u 1 1 0 0 u 3 1 u]$ & $1+u$ & $2$ & $%
149 $ \\ \hline
$\mathcal{L}_{4}$ & $[33uuu0u3uu110330u3010u110301u101]$ & $1+u$ & $2$ & $%
154 $ \\ \hline
$\mathcal{L}_{4}$ & $[330uu003 0 0 3 3 0 3 1 0 0 1 0 1 0 u 3 1 0 1 0 3 u1 u 1]$ & $1$ & $2$ & $%
156 $ \\ \hline
$\mathcal{L}_{4}$ & $[0 0 0 1 u u u 3 u 3 u 3 u 3 1 0 0 3 3 0 u 3 0 0 1 10 0 0 3 3 u]$ & $1$ & $2$ & $%
159 $ \\ \hline
$\mathcal{L}_{4}$ & $[3031u111133u01u133u1u00113303301]$ & $1+u$ & $3$ & $%
156 $ \\ \hline
\end{tabular}%
\end{center}
\end{table}

\begin{remark}
In Table \ref{tab:68codes} for the extension vectors $X$ over $\mathbb{F}%
_{2}+u\mathbb{F}_{2}$ the element $1+u$ is abbreviated as $3$ and the codes
are generated by the matrices of the following form
\begin{equation*}
G=\left[
\begin{array}{cc|c}
1 & 0 & X \\ \hline
y_{1} & cy_{1} &  \\
\vdots  & \vdots  & \psi \left( \mathcal{L}_{i}\right)  \\
y_{k} & cy_{k} &
\end{array}%
\right] \text{.}
\end{equation*}%
The binary code generated by $\phi \left( G\right) $ has a new weight
enumerator in $W_{68,2}$ for the given parameters. The binary generator
matrices for codes with $\gamma =3$ in $W_{68,2}$ in Table \ref{tab:68codes}
are available online at \cite{kaya}.
\end{remark}

\end{document}